\newtheorem{thm}{Theorem}[section]
\newtheorem{prop}[thm]{Proposition}
\newtheorem{lemma}[thm]{Lemma}
\theoremstyle{definition}
\newtheorem{definition}[thm]{Definition}
\theoremstyle{remark}
\newtheorem{remark}[thm]{Remark}
\newcommand{\R}{{\mathbb R}}
\newcommand{\N}{{\mathbb N}}
\newcommand{\M}{{\mathcal M}}
\newcommand{\bs}{\begin{split}}
\newcommand{\es}{\end{split}}
\begin{document}
\setcounter{page}{1}	
\title[Maximal Calder\'on-Zygmund operators]
{Noncommutative maximal strong $L_p$ estimates of Calder\'on-Zygmund operators}	
\author{Guixiang Hong}
\address{
	School of Mathematics and Statistics\\
	Wuhan University\\
	Wuhan 430072\\
	China}

\email{guixiang.hong@whu.edu.cn}
	
\author{Xudong Lai}
\address{
Institute for Advanced Study in Mathematics\\
Harbin Institute of Technology\\
Harbin
150001\\
China}

\email{xudonglai@hit.edu.cn}
 
		\author{Samya Kumar Ray}
\address{Statistics and Mathematics Unit, Indian Statistical Institute\\
203, B. T. Road, Kolkata, 700108\\ 
India}

\email{samyaray7777@gmail.com}
\author{Bang Xu}
\address{
Department of Mathematical Sciences\\
	Seoul National University\\
	Seoul 08826\\
	Republic of Korea}

\email{bangxu@whu.edu.cn}
\thanks{G. Hong and B. Xu were supported by National Natural Science Foundation of China (No. 12071355). X. Lai was supported by National Natural Science Foundation of China (No. 12271124 and No. 11801118), Heilongjiang Provincial Natural Science Foundation of China (YQ2022A005), Fundamental Research Funds for the Central Universities (No. FRFCU5710050121). S. Ray was supported by the DST-INSPIRE Faculty Fellowship DST/INSPIRE/04/2020/001132}

\subjclass[2010]{Primary 46L52; Secondary 42B20,  46L53}

\keywords{Vector-valued noncommutative $L_{p}$-spaces, Maximal Singular integral, Strong type $(p,p)$}

	\begin{abstract}
		In this paper, we obtain the desired noncommutative maximal inequalities of the truncated Calder\'on-Zygmund operators of non-convolution type acting on operator-valued $L_p$-functions for all $1<p<\infty$, answering a question left open in the previous work \cite{HLX}. 
	\end{abstract}

		\maketitle
		
\section{introduction}

This paper is devoted to the study of maximal singular integral theory in noncommutative analysis. The starting point of this problem is based on the boundedness theory of noncommutative Calder\'on-Zygmund (abbrieviated as CZ) operators (cf. \cite{M, JP1}), and this theory immediately showed great generality in solving problems in other fields (see e.g. \cite{GJP,JMP1,JMP2}).

Let us first set some notation. Recall that a function $k:\;\mathbb{R}^d \times \mathbb{R}^d \setminus \{(x,x):\;x\in\mathbb R^d\}\rightarrow\mathbb C$ is called a {\emph{standard}} kernel if it satisfies for some constant $C>0$ the size condition
\begin{align}\label{1}
	|k(x,y)| \ \leq \
	\frac{C}{|x-y|^d}
\end{align}
and for some $\gamma\in(0,1)$ the Lipschitz regularity condition 
\begin{align}\label{232}
	\begin{array}{rcl} \big| k(x,y) - k(x,z) \big|+\big| k(y,x) - k(z,x) \big|& \leq &
		\displaystyle \frac{C|y-z|^\gamma}{|x-y|^{d+\gamma}}   \hskip1pt
	\end{array}
\end{align}
whenever $|x-y|\geq2|y-z|$. An operator $T$ is said to be a {\it standard} CZ operator if it is associated with a {\emph{standard}} kernel $k$, that is for any compactly supported bounded function $f$,
\begin{equation}\label{30}
	Tf(x) = \int_{\mathbb{R}^d} k(x,y) f(y) \, dy,\ \  \ x \notin{\mathrm{supp}}  \hskip1pt f
\end{equation}
and $T$ admits a bounded extension on $L_2(\mathbb R^d)$.


Let $(\M,\tau)$ be a semifinite von Neumann algebra equipped with a normal semifinite faithful (abbrieviated as \emph{n.s.f}) trace. The noncommutative $L_p$-spaces associated with pair $(\M,\tau)$ are denoted by $L_p(\M)$. It is trivial that a {\it standard} CZ operator $T$ admits a bounded extension to noncommutative space $L_2(L_{\infty}(\mathbb{R}^{d})\overline{\otimes}\mathcal{M})$, which is still denoted by $T$ without causing any confusion. The extension to $L_p(L_{\infty}(\mathbb{R}^{d})\overline{\otimes}\mathcal{M})$ for $1<p\neq2<\infty$ was first justified by vector-valued theory (see e.g. \cite{HNVW16}) since  $L_p(\mathcal{M})$ are UMD spaces. After the appearance of noncommutative harmonic analysis,
it was shown remarkable that any {\emph{standard}} CZ operator $T$ is of noncommutative weak type $(1,1)$ (cf. \cite{JP1})  and is bounded from $L_\infty$ to noncommutative BMO space (cf. \cite{M}), which also deduce the strong $L_p$ results by interpolation. However, the study on boundedness theory of maximal CZ operators acting on operator-valued functions does not make any progress until ten years later (see \cite{HLX}). An immediate difficulty lies in the fact that the classical maximal function of the  form $M=\sup_{i\in I}|f_i|$ no longer exists in general when 
$(f_i)_{i\in I}$ is a family of operators.  Although, the definition of noncommutative weak type $(1,1)$ maximal inequalities exists in the early stage of noncommutative ergodic theory, the formulation of $L_p$ maximal inequalities was not proposed until the vector-valued noncommutative $L_p$-spaces (see Section 2) appeared two decades ago introduced by Pisier \cite{pisier} and Junge \cite{J1} and has been widely used since then.


For any $\varepsilon>0$, the associated truncated singular integral operator $T_{\varepsilon}$ is defined by
\begin{equation}\label{maximal12}
T_{\varepsilon}f(x) =\int_{|x-y|>\varepsilon} k(x,y) f(y)dy.
\end{equation}
In particular, it is well-known from classical theory (see e.g. \cite[Proposition 4.1.11]{Gra2014}) that there exists a CZ operator $T_0$ and $b\in L_\infty(\mathbb R^d)$ such that
\begin{align}\label{po}
Tf=T_0f+bf
\end{align}
for all $f\in L_2(\mathbb{R}^{d})$.
Here $T_0$ is the weak operator limit in $\mathcal B(L_2(\mathbb{R}^{d}))$ of truncated operators $T_{\varepsilon_j}$ for some subsequence $(\varepsilon_j)_{j\in\N}\subset(0,\infty)$ with $\varepsilon_j\rightarrow0$ as $j\rightarrow\infty$, whose distributional kernel coincides with $k$ on $\mathbb{R}^d \times \mathbb{R}^d \setminus \{(x,x):\;x\in\mathbb R^d\}$.


When the kernel satisfies the $L_2$-H\"ormander condition (see e.g. \cite{CCP,HLX}) which is quite weaker than the Lipchitz regularity, we obtain a weak type $(1,1)$ criterion  under the assumption of a strong type $(p_0,p_0)$ of $(T_\varepsilon)_{\varepsilon>0}$ with $1<p_0<\infty$ (see \cite[Theorem 1.1]{HLX}). As an important application, we obtained the boundedness theory of {\emph{standard}} maximal
CZ operators of convolution type (see \cite[Theorem 1.3]{HLX}) and left the strong type $(p,p)$ estimates of non-convolution case as an open question.

In the present paper, we give the answer of above question.
\begin{thm}\label{thm:maximal}
Let $T$ be a Calder\'on-Zygmund operator with a {\emph{standard}} kernel $k$. Let $T_{\varepsilon}$ be defined as (\ref{maximal12}). Then the following conclusions hold.
\begin{itemize}
\item[(i)] For $1<p<\infty$, $(T_{\varepsilon})_{\varepsilon>0}$ is of strong type $(p,p)$. More precisely, there exists a constant $C_p$ such that for all $f\in L_p(L_{\infty}(\mathbb{R}^{d})\overline{\otimes}\mathcal{M})$,
\begin{align}\label{maximal}
\big\|{\sup_{\varepsilon>0}}^+T_{\varepsilon}f\big\|_p\leq C_p\|f\|_{p}
\end{align}
with $C_p\leq O({p^4}/{(p-1)^3})$.

\item[(ii)]$(T_{\varepsilon})_{\varepsilon>0}$ is of weak type $(1,1)$. More precisely, there exists a constant $C$ such that for all $f\in L_1(L_{\infty}(\mathbb{R}^{d})\overline{\otimes}\mathcal{M})$ and any $\lambda>0$, there is a projection $e\in L_\infty(\mathbb{R}^{d})\overline{\otimes}\mathcal{M}$ satisfying
		$$\forall \varepsilon>0\ \ \|e(T_{\varepsilon}f)e\|_{\infty}\leq\lambda\ \ \mbox{and}\ \ \tau\otimes\int(e^{\perp})\leq C \frac{\|f\|_1}{\lambda}.$$
		
		\item[(iii)] 
		For any $f\in L_p(L_{\infty}(\mathbb{R}^{d})\overline{\otimes}\mathcal{M})$ with $1\leq p<\infty$,
		$$T_{\varepsilon_j}f\xrightarrow{b.a.u.}T_0f$$ 
	where $(\varepsilon_j)_{j\in\N}$ and $T_0$ are introduced in (\ref{po}). 
\end{itemize}
\end{thm}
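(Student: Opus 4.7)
The plan is to prove (i) via a noncommutative analogue of Cotlar's inequality and derive (ii)--(iii) as consequences. Fix $x \in \bR^d$ and $\varepsilon>0$, set $B := B(x,\varepsilon)$ and $B' := B(x,\varepsilon/2)$, and observe that $T_\varepsilon f(x) = T(f\dsone_{B^c})(x)$. By the Lipschitz regularity \eqref{232}, for any $y \in B'$ the representation
\begin{equation*}
T_\varepsilon f(x) \;=\; Tf(y) \;-\; T(f\dsone_B)(y) \;+\; \int_{B^c}\bigl[k(x,z)-k(y,z)\bigr]f(z)\,dz
\end{equation*}
holds, with the last integral controlled---after dyadic decomposition of $B^c$ and termwise use of \eqref{232}---by the action of the noncommutative Hardy--Littlewood maximal operator on $f$ at scale $x$. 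Averaging in $y$ over $B'$ yields the Cotlar-type decomposition
\begin{equation*}
T_\varepsilon f(x) \;=\; A_{\varepsilon/2}(Tf)(x) \;-\; \frac{1}{|B'|}\int_{B'}T(f\dsone_B)(y)\,dy \;+\; \overline{R}_\varepsilon f(x),
\end{equation*}
in which $A_r$ denotes the ball-average at radius $r$ and the residual $\overline{R}_\varepsilon f$ is absorbed by the noncommutative Hardy--Littlewood maximal of $f$.

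I would then control $\|{\sup_{\varepsilon>0}}^+ T_\varepsilon f\|_p$ piece by piece. The first term is dominated by the noncommutative Hardy--Littlewood maximal of $Tf$, so by Mei's noncommutative maximal inequality (of order $O(p^2/(p-1)^2)$) composed with the noncommutative $L_p$ boundedness of $T$ from \cite{JP1,M} (of order $O(p^2/(p-1))$, obtained by interpolating the weak $(1,1)$ bound with the $L_\infty\to\mathrm{BMO}$ estimate), its $L_p$ norm is $O(p^4/(p-1)^3)\|f\|_p$; the residual $\overline{R}_\varepsilon f$ is absorbed analogously. The heart of the argument is the ``local'' middle piece: classically one estimates it pointwise by $C\,M_{HL}f(x)$ using Kolmogorov's inequality together with the weak $(1,1)$ of $T$. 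In the noncommutative setting I would invoke the operator-valued weak $(1,1)$ of $T$ from \cite{JP1} combined with a Kolmogorov-type lemma tailored to the Pisier--Junge $L_p(\cdot;\ell_\infty^+)$ formalism, whose contribution fits within the same $O(p^4/(p-1)^3)$ budget.

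Part (ii) is then immediate: having strong $(p_0,p_0)$ of $(T_\varepsilon)_{\varepsilon>0}$ for some $p_0\in(1,\infty)$ from (i) and noting that \eqref{232} implies the $L_2$-H\"ormander condition, one invokes the criterion \cite[Theorem 1.1]{HLX} to get weak type $(1,1)$. Part (iii) follows from (i)--(ii) by the standard Banach-principle argument: verify b.a.u.\ convergence $T_{\varepsilon_j}f \to T_0 f$ on a dense subset---such as the algebraic tensor of $C_c^\infty(\bR^d)$ with the $*$-subalgebra of $\tau$-finite elements of $\cM$, where classical singular-integral theory applies factorwise---and then transfer the convergence to all $L_p(L_\infty(\bR^d)\overline{\otimes}\cM)$, $1\leq p<\infty$, via the maximal inequalities. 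The principal obstacle is the local middle piece: the operator-valued setting precludes the pointwise Kolmogorov estimate of the scalar theory and requires a careful marriage of the Pisier--Junge formalism with the noncommutative weak $(1,1)$ bound for $T$, while keeping tight track of the $p$-dependence so as to recover the sharp constant $O(p^4/(p-1)^3)$.
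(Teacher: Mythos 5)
Your outline of (ii) and (iii) matches the paper: (ii) is exactly the deduction from (i) via the criterion of \cite[Theorem 1.1]{HLX}, and (iii) is the density-plus-maximal-inequality argument (the paper carries it out with $g_n\in C_c^\infty(\mathbb R^d)\otimes\mathcal S_{\mathcal M}$, the weak type $(p,p)$ bounds, and \cite[Proposition 1.3]{CL1}). The problem is part (i), which is the entire novelty of the theorem, and there your proposal has a genuine gap precisely at what you yourself call ``the heart of the argument.'' You propose to control the local middle piece $\frac{1}{|B'|}\int_{B'}T(f\chi_B)(y)\,dy$ by ``the operator-valued weak $(1,1)$ of $T$ combined with a Kolmogorov-type lemma tailored to the Pisier--Junge formalism.'' No such lemma is supplied, and the paper explicitly flags this route as unavailable: the classical Kolmogorov step requires passing through $M_\varepsilon(|Tf|^s)^{1/s}$ with $s<1$ (equivalently, an $L^{s}$-average of $|T(f\chi_B)|$ for $s<1$ extracted from a distribution-function estimate), and in the noncommutative setting the weak $(1,1)$ statement produces a projection rather than a distribution function, while the map $a\mapsto a^{s}$ for $s<1$ lacks the operator convexity/subadditivity needed to run the argument. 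The authors state that a noncommutative Cotlar inequality with $s\le 1$ ``seems unavailable,'' and your proposal does not overcome this obstruction --- it merely names it.

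The paper's actual proof of (i) takes a different route that avoids ball-averaging $T(f\chi_B)$ altogether. It compares $T_\varepsilon f$ with the smooth-bump convolution $\varphi_\varepsilon\ast(Tf)$ and analyzes the difference kernel $K(x,y,\varepsilon)=k(x,y)\chi_{|x-y|>\varepsilon}-\int\varphi_\varepsilon(x-z)k(z,y)\,dz$, splitting it according to $|x-y|>\varepsilon$ (handled by Lipschitz regularity) and $|x-y|\le\varepsilon$ (split further into three pieces). The delicate piece involves the truncated kernel integrals $I_{\varepsilon_j,\varepsilon/2}(y)=\int_{\varepsilon_j<|z-y|<\varepsilon/2}k(z,y)\,dz$, which are controlled in $L^{s'}$ on balls by Lemma \ref{mainlemma1}, and the operator-valued H\"older inequality (Lemma \ref{holderinequality}) then yields the term $\|{\sup_\varepsilon}^+(M_\varepsilon f^s)^{1/s}\|_p$ with $s>1$ --- a \emph{weaker} Cotlar inequality than the classical one, but one that is actually provable and still suffices for strong type $(p,p)$ after applying the noncommutative Hardy--Littlewood maximal inequality to $f^s\in L_{p/s}$ and letting $s\to1$. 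If you want to salvage your approach, you would need to either prove the Kolmogorov-type lemma you invoke (which would be a significant result in its own right) or switch to the paper's $s>1$ substitute.
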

\begin{remark}\label{constant}
	From the corresponding classical result, it is well-known that $C_p\approx p$ as $p\rightarrow\infty$. When $T_\varepsilon$ in \eqref{maximal} is replaced by $M_\varepsilon$, it is known that $C_p\leq C p^2/(p-1)^2$ from the noncommutative Hardy-Littlewood or Doob's maximal inequalities, see e.g. \cite{M, JX}. But at the moment of writing, we are unable to obtain such an order in the present setting. The difficulty can be explained as the lack of a Marcinkiewicz interpolation theorem for a family of general self-adjoint linear operators (see \cite{CaRi}).
\end{remark}

Let us now describe the strategy of the proof of Theorem \ref{thm:maximal}. The deduction of (ii) from (i) is the main result of our previous work \cite[Theorem 1.1]{HLX}. 
So the novel part of Theorem \ref{thm:maximal} lies in (i) for CZ operators of non-convolution type, which follows from the following noncommutative Cotlar inequality.

\begin{prop}\label{prop:nc cotlar}
Let $T$ be a Calder\'on-Zygmund operator with a {\emph{standard}} kernel $k$ and $T_{\varepsilon}$ be defined as (\ref{maximal12}). Then for any $1<s<\infty$, $1<p<\infty$ and $f\in L_p(L_{\infty}(\mathbb{R}^{d})\overline{\otimes}\mathcal{M})_{+}$
\begin{equation}\label{nc cotlar}
\begin{split}
\big\|{\sup_{\varepsilon>0}}^{+}T_{\varepsilon} f\big\|_p &\leq  C_{d,\gamma,s}\Big(\|{\sup_{\varepsilon>0}}^{+}M_{\varepsilon} (Tf)\big\|_p+\big\|{\sup_{\varepsilon>0}}^{+}\big(M_{\varepsilon} f^{s}\big)^{\frac{1}{s}}\big\|_p\Big),
\end{split}
\end{equation}
where $M_\varepsilon$ is the Hardy-Littlewood averaging operator
\begin{equation}\label{e:20Mr}
M_\varepsilon g(x)=\frac{1}{c_d\varepsilon^{d}}\int_{|x-y|\leq \varepsilon}g(y)dy
\end{equation}
defined for any reasonable operator-valued function $g$.
\end{prop}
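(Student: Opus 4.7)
The plan is to lift the classical Cotlar argument to the noncommutative setting, at the level of operator-valued functions. Fix $x \in \mathbb{R}^d$ and $\varepsilon > 0$, set $B = B(x, \varepsilon/2)$, and split $f = f_1^{x,\varepsilon} + f_2^{x,\varepsilon}$ with $f_1^{x,\varepsilon} = f\chi_{B(x,\varepsilon)}$. Since $T_\varepsilon f(x) = T f_2^{x,\varepsilon}(x)$, for every $z \in B$ the identity
$$T_\varepsilon f(x) = Tf(z) - Tf_1^{x,\varepsilon}(z) + \bigl[Tf_2^{x,\varepsilon}(x) - Tf_2^{x,\varepsilon}(z)\bigr]$$
holds; averaging over $z \in B$ yields the key decomposition
$$T_\varepsilon f(x) = M_{\varepsilon/2}(Tf)(x) - M_{\varepsilon/2}\bigl(Tf_1^{x,\varepsilon}\bigr)(x) + R_\varepsilon(x),$$
where $R_\varepsilon(x) := Tf_2^{x,\varepsilon}(x) - M_{\varepsilon/2}(Tf_2^{x,\varepsilon})(x)$. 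The triangle inequality for the $\|{\sup_{\varepsilon>0}}^+\cdot\|_p$ norm then reduces (\ref{nc cotlar}) to bounding the three resulting families separately.

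The first family immediately contributes a $2^d$-multiple of $\|{\sup_{\varepsilon>0}}^+ M_\varepsilon(Tf)\|_p$. For the smoothness remainder, one writes $R_\varepsilon(x) = \int_{|x-y|>\varepsilon} K_\varepsilon(x,y) f(y)\,dy$ with $|K_\varepsilon(x,y)| \leq C\varepsilon^\gamma/|x-y|^{d+\gamma}$ by (\ref{232}); splitting $K_\varepsilon$ into real/imaginary and positive/negative parts, and exploiting $f \geq 0$, expresses $R_\varepsilon$ as a linear combination of four positive operator-valued functions, each dominated in operator order by $C \sum_{k\geq 0} 2^{-k\gamma} M_{2^{k+1}\varepsilon}(f)(x)$. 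Passing to $\|{\sup_{\varepsilon>0}}^+\cdot\|_p$ and summing the geometric series bounds this by a constant times $\|{\sup_{\varepsilon>0}}^+ M_\varepsilon f\|_p$, which is in turn controlled by $\|{\sup_{\varepsilon>0}}^+ (M_\varepsilon f^s)^{1/s}\|_p$ via the operator Jensen inequality ($t \mapsto t^s$ is operator convex for $s \in [1,2]$, giving $M_\varepsilon f \leq (M_\varepsilon f^s)^{1/s}$ in that range; the case $s > 2$ reduces to $s = 2$ by the monotonicity in $s$ of $(M_\varepsilon f^s)^{1/s}$ as an operator, itself a consequence of the operator concavity of $t \mapsto t^r$ for $r \in (0,1]$).

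The main obstacle will be the middle family $M_{\varepsilon/2}(Tf_1^{x,\varepsilon})$, since the localisation $f_1^{x,\varepsilon}$ depends on the centre $x$. The plan is to apply the Kadison--Schwarz inequality for the unital positive map $M_{\varepsilon/2}$ to obtain, pointwise in $x$ and $\varepsilon$,
$$\bigl|M_{\varepsilon/2}(Tf_1^{x,\varepsilon})(x)\bigr|^2 \leq M_{\varepsilon/2}\bigl(|Tf_1^{x,\varepsilon}|^2\bigr)(x) \leq \frac{1}{|B|}\int_{\mathbb{R}^d} |Tf_1^{x,\varepsilon}(z)|^2\,dz.$$
Since $T$ has a scalar kernel, it extends boundedly to $L_2(\mathbb{R}^d; H)$ for every Hilbert space $H$. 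Applying this fibrewise through the GNS Hilbert space of an arbitrary normal state $\varphi$ on $\mathcal{M}$, and using that $A \leq B$ in operator order iff $\varphi(A) \leq \varphi(B)$ for every normal state $\varphi$, one obtains
$$\int_{\mathbb{R}^d} |Tf_1^{x,\varepsilon}(z)|^2\,dz \leq C \int_{B(x,\varepsilon)} f(y)^2\,dy,$$
whence $|M_{\varepsilon/2}(Tf_1^{x,\varepsilon})(x)|^2 \leq C' M_\varepsilon(f^2)(x)$. Passing from this pointwise column-type operator domination to the bound $\|{\sup_{\varepsilon>0}}^+ M_{\varepsilon/2}(Tf_1^{x,\varepsilon})\|_p \leq C'' \|{\sup_{\varepsilon>0}}^+ (M_\varepsilon f^2)^{1/2}\|_p$ will use the standard column-norm identity $\|(x_\varepsilon)\|_{L_p(\mathcal{N};\ell_\infty^c)}^2 = \|(x_\varepsilon^* x_\varepsilon)\|_{L_{p/2}(\mathcal{N};\ell_\infty)}$ together with positive domination in $\ell_\infty$; a symmetric row argument yields the full $\ell_\infty$ estimate. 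The hardest technical point will be the careful glueing of state-wise $L_2$ bounds into an operator inequality and managing the non-positivity of the middle family through these column/row decompositions.
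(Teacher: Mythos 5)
Your route is the classical Cotlar decomposition: split $f=f_1^{x,\varepsilon}+f_2^{x,\varepsilon}$, use $T_\varepsilon f(x)=Tf_2^{x,\varepsilon}(x)$ and average over $z\in B(x,\varepsilon/2)$. This is genuinely different from the paper's proof, which never localizes $f$ but instead compares $T_\varepsilon f$ with the mollification $\varphi_\varepsilon\ast(Tf)$ and analyzes the difference kernel $K(x,y,\varepsilon)$. Two of your three families are handled correctly: the first gives $\|{\sup_{\varepsilon>0}}^{+}M_\varepsilon(Tf)\|_p$ on the nose, and the smoothness remainder $R_\varepsilon$, after the standard reduction to a real kernel and positive/negative parts, is dominated by $C\sum_k 2^{-k\gamma}M_{2^{k+1}\varepsilon}f$ and hence, via Remark \ref{rk:MaxFunct} and operator Jensen, by $\|{\sup_{\varepsilon>0}}^{+}(M_\varepsilon f^s)^{1/s}\|_p$ for every $s\ge 1$; this part parallels the paper's treatment of $K_1$. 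The Kadison--Schwarz/column-$L_2$ step for the middle family is also a valid operator inequality: $|M_{\varepsilon/2}(Tf_1^{x,\varepsilon})(x)|^2\le C\,M_\varepsilon(f^2)(x)$ does follow from testing against normal states and the boundedness of $T\otimes\mathrm{id}_H$ on $L_2(\mathbb{R}^d;H)$.

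The genuine gap is that this middle estimate produces the majorant $(M_\varepsilon f^2)^{1/2}$, and your monotonicity argument runs in the wrong direction for small $s$: for $1<s<2$ one has $(M_\varepsilon f^s)^{1/s}\le (M_\varepsilon f^2)^{1/2}$, not the reverse, so $\|{\sup_{\varepsilon>0}}^{+}(M_\varepsilon f^2)^{1/2}\|_p$ is not controlled by $\|{\sup_{\varepsilon>0}}^{+}(M_\varepsilon f^s)^{1/s}\|_p$. Your argument therefore establishes \eqref{nc cotlar} only for $s\ge 2$, whereas the proposition claims it for all $1<s<\infty$, and the missing range is exactly the one the paper needs: the proof of Theorem \ref{thm:maximal}(i) chooses $1<s<p$, so for $1<p\le 2$ the case $s<2$ is indispensable. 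The obstruction is intrinsic to your method: Kadison--Schwarz, $|\Phi(a)|^2\le\Phi(|a|^2)$, and the column-space boundedness of $T$ are $L_2$ phenomena; there is no operator-order analogue $|\Phi(a)|^s\le\Phi(|a|^s)$ for $1<s<2$ and non-positive $a$, nor an operator-order bound $\int|Tg|^s\,dz\le C\int|g|^s\,dy$. The paper avoids the local term altogether: the near-diagonal contribution sits in the kernel piece $K_2(x,y,\varepsilon)$, whose worst part $A_\varepsilon^3$ involves only the scalar quantity $I_{\varepsilon_j,\varepsilon/2}(y)\varphi_\varepsilon(x-y)$, and the uniform $L_{s'}$ bound of Lemma \ref{mainlemma1} combined with the operator-valued H\"older inequality (Lemma \ref{holderinequality}) is what allows $s$ to be taken arbitrarily close to $1$. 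You would need an ingredient of this kind to close the gap.
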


As the experts might be aware of that the Cotlar inequality \eqref{nc cotlar} is much weaker than the classical one (see e.g. \cite[Theorem 4.2.4]{{Gra2014}}):
for all $0<s\leq1$,
\begin{align}\label{weak cotlar}
\sup_{\varepsilon>0}|T_\varepsilon f(x)|\leq C_{d,\gamma,s}\Big(\sup_{\varepsilon>0}(M_\varepsilon(|Tf|^s)(x))^{\frac1s}+\sup_{\varepsilon>0}M_\varepsilon (|f|)(x)\Big),
\end{align}
which yields not only the strong $L_p$ estimates but also the weak $L_1$. As it has been explained briefly in \cite{HLX}, a noncommutative version of \eqref{weak cotlar} in the case $s<1$ seems unavailable. And for operators with kernels of convolution type, the inequality \eqref{nc cotlar} in the case $s=1$ has been obtained in \cite{HLX}. Our approach to \eqref{nc cotlar} and thus Theorem \ref{thm:maximal}(i) seems new even in the classical setting.  Due to the noncommutativity and nonconvolution, at the moment of writing, we are unable to decrease $s$ to 1 or below in \eqref{nc cotlar}.

\section{Preliminaries}

\subsection{Noncommutative $L_{p}$-spaces}
Throughout this paper,  $\M$ will denote a semifinite von Neumann algebra equipped with a \emph{n.s.f} trace $\tau$. Let $\mathcal{S_{\M+}}$ be the collection of $x\in\M_{+}$ such that $\tau(\mathrm{supp}x)<\infty$, where $\M_{+}$ denotes the positive part of $\M$ and $\mathrm{supp}x$ is the support of $x$. Let $\mathcal{S}_{\M}$ be the linear span of $\mathcal{S_{\M+}}$. Then $\mathcal{S}_{\M}$ is a $w^{*}$-dense $\ast$-subalgebra of $\M$. Given $1\leq p<\infty$, we set
$$\|x\|_{p}=[\tau(|x|^p)]^{1/p}\ \ x\in\mathcal{S}_{\M},$$
where $|x|=(x^{\ast}x)^{\frac{1}{2}}$. The completion of the norm space $(\mathcal{S}_{\M},\|\cdot\|_{p})$ is denoted by  $L_{p}(\M)$. As usual, we set $L_{\infty}(\M) = \M$ equipped with the operator norm $\|\cdot\|_{\M}$, and the positive part of $L_{p}(\M)$ is written as $L_{p}(\M)_{+}$. We refer the reader to \cite{FK,PX2} for further details on noncommutative $L_p$-spaces.

\subsection{Noncommutative $\ell_{\infty}$-valued $L_{p}$-spaces}
Let $I$ be an index set. 
Given $1\le p\leq\infty$, $L_p(\mathcal {M};\ell_\infty(I))$ is defined as all sequence $x=(x_n)_{n\in I}$ in $L_p(\mathcal {M})$ for which  there are $a,b\in L_{2p}( \mathcal {M})$  and $y=(y_n)_{n\in I}\subset L_\infty(\mathcal {M})$ such that
$$x_n=ay_nb,\ \ \forall\ n\in I.$$
The norm of $x$ in $L_p(\mathcal {M};\ell_\infty(I))$ is defined as
$$\|x\|_{L_p(\mathcal {M};\ell_\infty(I))}=\inf\left\{\big\|a\big\|_{{2p}}\sup_{n\in I}\big\|y_n\big\|_\infty\big\|b\big\|_{{2p}}\right\},$$
where the infimum runs over all possible factorizations as above.
Usually, the norm of $x$ in $L_p(\mathcal {M};\ell_\infty(I))$ is denoted by $\|{\sup_{n\in I}}^+x_n\|_p$, that is $\|{\sup_{n\in I}}^+x_n\|_p\triangleq\|x\|_{L_p(\mathcal {M};\ell_\infty(I))}$. We refer to \cite{J1} for more details. 

The following useful characterization can be found in e.g. \cite[Remark 4.1]{CXY13}.
\begin{remark}\label{rk:MaxFunct}
	Let $x=(x_n)_{n\in I}$ be a sequence of selfadjoint operators in $L_p(\mathcal {M})$. Then $x\in L_p(\mathcal {M};\ell_\infty)$ if and only if there exists $a\in L_{p}(\M)_{+}$
	such that $-a\leq x_n\leq a$ for all $n\in I$. Moreover, in this case,
	$$\|x\|_{L_p(\mathcal {M}; \ell_\infty(I))}=\inf\Big\{\|a\|_p: a\in L_{p}(\M)_{+}\ \mbox{such that}\ -a\leq x_n\leq a
	,\ \forall n\in I\Big\}.$$
\end{remark}

\noindent In practice, we will omit the index set $I$ if there will be no ambiguity.

\subsection{Almost uniform convergence}
The following definition of (bilaterally) almost uniform convergence considered in Lance's work \cite{Lan76}, which presents a suitable replacement of almost everywhere convergence in the noncommutative setting.
\begin{definition}\label{b.a.u.}\rm
	Let $\mathcal {M}$ be a von Neumann algebra equipped with a \emph{n.s.f} trace $\tau$. 
	\begin{itemize}
			\item[(i)]  The sequence $(x_n)$ is said to converge bilaterally almost uniformly (b.a.u. in short) to $x$ if for any $\varepsilon>0$, there is a projection $e\in\M$
			such that
			$$\tau(e^\bot)<\varepsilon \ \ \  \mbox{and} \ \ \ \lim _{n\rightarrow\infty}\|e(x_n-x)e\|_\infty=0.$$
			
			\item[(ii)] The sequence $(x_n)$ is said to converge almost uniformly (a.u. in short) to $x$ if for any $\varepsilon>0$, there is a projection $e\in\M$
			such that
			$$\tau(e^\bot)<\varepsilon \ \ \  \mbox{and} \ \ \ \lim _{n\rightarrow\infty}\|(x_n-x)e\|_\infty=0.$$
		\end{itemize}
\end{definition}

\subsection{Some auxiliary lemmas}
At the end of this section, we present two lemmas.
The first lemma should be well-known to experts, see e.g. \cite[Theorem 4.3]{CXY13}.
\begin{lemma}\label{result of cxy}
Let $\psi$ be an integrable function on $\mathbb{R}^d$  such that $|\psi|$ is radial and radially decreasing.
Let $\psi_\varepsilon(x)=\frac1{\varepsilon^d}\, \psi(\frac x\varepsilon)$ for $x\in\mathbb{R}^d$ and $\varepsilon>0$. Then for $1<p\leq\infty$
 $$\big\|{\sup_{\varepsilon>0}}^+\psi_\varepsilon\ast f\big\|_p\leq C_{d}\big\|{\sup_{\varepsilon>0}}^+M_\varepsilon f\big\|_p,\;\forall f\in L_p(L_{\infty}(\mathbb{R}^{d})\overline{\otimes}\mathcal{M})_+.$$
Here $\|\cdot\|_p$ denotes $\|\cdot\|_{L_p(L_{\infty}(\mathbb{R}^{d})\overline{\otimes}\mathcal{M})}$.
\end{lemma}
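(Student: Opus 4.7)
The plan is to reduce the inequality to the case $\psi\geq 0$, and then exploit a layer-cake decomposition of $\psi$ combined with the order characterization of the noncommutative $\ell_\infty$-norm recorded in Remark \ref{rk:MaxFunct}.

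First, writing $\psi=(\psi_1-\psi_2)+i(\psi_3-\psi_4)$ with each $0\leq\psi_j\leq|\psi|$ pointwise, and observing that for $f\geq 0$ the convolutions satisfy $0\leq(\psi_j)_\varepsilon*f\leq|\psi|_\varepsilon*f$ as positive operators, the triangle inequality for the $L_p(\mathcal{M};\ell_\infty)$-norm together with Remark \ref{rk:MaxFunct} yields
\[
\big\|{\sup_{\varepsilon>0}}^+\psi_\varepsilon*f\big\|_p\leq 4\big\|{\sup_{\varepsilon>0}}^+|\psi|_\varepsilon*f\big\|_p,
\]
so it suffices to treat nonnegative, radial, radially decreasing $\psi$. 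For such $\psi$ the layer-cake formula provides a positive Borel measure $\mu$ on $(0,\infty)$ with
\[
\psi(x)=\int_0^\infty \chi_{B_r}(x)\,d\mu(r)\qquad\text{and}\qquad c_d\int_0^\infty r^d\,d\mu(r)=\|\psi\|_1,
\]
where $c_d=|B_1|$. Rescaling by $\varepsilon$ and applying Fubini then give the identity, valid as a positive operator-valued Bochner integral in $L_p(L_{\infty}(\mathbb{R}^{d})\overline{\otimes}\mathcal{M})$,
\[
\psi_\varepsilon*f(x)=c_d\int_0^\infty r^d(M_{r\varepsilon}f)(x)\,d\mu(r).
\]

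To finish, I invoke Remark \ref{rk:MaxFunct} once more: for any $\delta>0$ there exists $a\in L_p(L_{\infty}(\mathbb{R}^{d})\overline{\otimes}\mathcal{M})_+$ with $M_\varepsilon f\leq a$ for every $\varepsilon>0$ and $\|a\|_p\leq\big\|{\sup_{\varepsilon>0}}^+M_\varepsilon f\big\|_p+\delta$. Since $M_{r\varepsilon}f\leq a$ for all $r,\varepsilon>0$, integrating against the positive measure $r^d\,d\mu(r)$ in the representation above dominates $\psi_\varepsilon*f$ by $\|\psi\|_1\cdot a$, uniformly in $\varepsilon$. Applying Remark \ref{rk:MaxFunct} in the reverse direction gives $\big\|{\sup_{\varepsilon>0}}^+\psi_\varepsilon*f\big\|_p\leq\|\psi\|_1\|a\|_p$, and sending $\delta\to 0$ yields the stated bound with constant $C_d=4\|\psi\|_1$.

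The main technical obstacle is of a bookkeeping rather than conceptual nature. One must interpret $\int_0^\infty r^d M_{r\varepsilon}f\,d\mu(r)$ as a bona fide positive operator-valued integral in $L_p(L_{\infty}(\mathbb{R}^{d})\overline{\otimes}\mathcal{M})$ so that scalar-valued order inequalities can be integrated against $\mu$; this follows from the positivity of the integrand. One must also apply Remark \ref{rk:MaxFunct}, which is stated for an index set $I$, to the uncountable family indexed by $\varepsilon\in(0,\infty)$. This is routine, since one may first establish the estimate on a countable dense subset of parameters and extend via continuity of the maps $\varepsilon\mapsto M_\varepsilon f$ and $\varepsilon\mapsto\psi_\varepsilon*f$ in $L_p$.
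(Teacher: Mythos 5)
Your argument is correct. The paper does not actually prove this lemma --- it only cites \cite[Theorem 4.3]{CXY13} --- and your layer-cake reduction of $\psi_\varepsilon\ast f$ to a positive superposition $c_d\int_0^\infty r^d M_{r\varepsilon}f\,d\mu(r)$ of Hardy--Littlewood averages, followed by the order characterization of Remark \ref{rk:MaxFunct}, is precisely the standard argument behind that citation. The only cosmetic remark is that your constant comes out as $4\|\psi\|_1$ (just $\|\psi\|_1$ for nonnegative $\psi$) rather than a purely dimensional $C_d$; since $\psi$ is fixed in the statement (and has integral one in the paper's application), this is harmless.
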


By applying the operator convexity of $0\leq t\rightarrow t^{p}$ for $1\leq p\leq2$, we can deduce 
 the operator-valued version of the H\"{o}lder inequality (see e.g. \cite[Lemma 3.5]{M}).
\begin{lemma}\label{holderinequality}
Let $(\Omega,\mu)$ be a measure space. Then for $1< p<\infty$ and let $p^{\prime}$ be the conjugate index of $p$, that is $1/ p^\prime +1/p =1$, we have
\begin{equation}\label{maximal132}
\int_{\Omega}f(x)g(x)d\mu(x)\leq\Big(\int_{\Omega}f(x)^{p}d\mu(x)\Big)^{\frac{1}{p}}
\Big(\int_{\Omega}g(x)^{p^{\prime}}d\mu(x)\Big)^{\frac{1}{p^{\prime}}},
\end{equation}
where $f:\Omega\rightarrow L_{1}(\mathcal{M})+L_{\infty}(\mathcal{M})$ and $g:\Omega\rightarrow \mathbb{C}$ are positive functions such that all members of inequality (\ref{maximal132}) make sense.
\end{lemma}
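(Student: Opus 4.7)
The plan is to prove this as a noncommutative adaptation of the classical Hölder inequality via a normalization-plus-operator-Jensen argument, exploiting crucially that $g$ is scalar-valued so that $g(x)^{r}$ commutes with $f(x)$ for every real exponent $r$. I may assume $F := \int_{\Omega}f^{p}\,d\mu$ and $G := \int_{\Omega}g^{p'}\,d\mu$ are both finite and strictly positive, for otherwise (\ref{maximal132}) is trivial or vacuous. Introduce the probability measure $d\nu := G^{-1}g^{p'}\,d\mu$ on $\{g>0\}$ and the positive operator-valued function $\phi(x):=f(x)\,g(x)^{1-p'}$; since $g(x)$ is a scalar commuting with $f(x)$, the identity $p(1-p')=-p'$ yields the pointwise relation $\phi^{p} = f^{p}\,g^{-p'}$, and therefore
\begin{equation*}
\int_{\Omega}\phi^{p}\,d\nu \;=\; G^{-1}\!\int_{\Omega}f^{p}\,d\mu \;=\; \frac{F}{G}, \qquad \int_{\Omega}\phi(x)\,g(x)^{p'}\,d\mu(x) \;=\; \int_{\Omega}f(x)g(x)\,d\mu(x).
\end{equation*}

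Next I invoke the operator Jensen inequality. By L\"owner's theorem, the function $t\mapsto t^{1/p}$ is operator monotone and hence operator concave on $[0,\infty)$ for every $p\ge 1$; applied to the positive operator-valued function $\phi^{p}$ together with the probability measure $\nu$, this produces
\begin{equation*}
\int_{\Omega}\phi\,d\nu \;=\; \int_{\Omega}(\phi^{p})^{1/p}\,d\nu \;\le\; \Big(\int_{\Omega}\phi^{p}\,d\nu\Big)^{1/p} \;=\; (F/G)^{1/p}.
\end{equation*}
Multiplying by $G$ and using the identities from the previous paragraph delivers the desired bound
\begin{equation*}
\int_{\Omega}fg\,d\mu \;\le\; G\cdot (F/G)^{1/p} \;=\; F^{1/p}\,G^{1/p'}.
\end{equation*}

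The main obstacle is purely technical: one must justify the integral form of the Hansen-Pedersen operator Jensen inequality in the semifinite setting, where $\phi^{p}$ takes values in $L_{1}(\mathcal{M})+L_{\infty}(\mathcal{M})$ rather than in a fixed bounded $C^{*}$-algebra. This is standard---approximate by simple operator-valued step functions using the hypothesis that all members of (\ref{maximal132}) make sense, apply the discrete Hansen-Pedersen inequality on each approximation, and pass to the limit---but requires some care with integrability. An alternative route consistent with the hint preceding the lemma is to apply the operator convexity of $t\mapsto t^{p}$ directly to $\phi$ when $p\in[1,2]$ and then take the operator monotone $(1/p)$-th root; for $p>2$ that route breaks down since operator convexity of $t^{p}$ fails beyond $p=2$, which is precisely why passing through the operator concavity of $t^{1/p}$ for all $p\ge 1$ used above is the cleanest way to cover the full range $1<p<\infty$ in a single stroke.
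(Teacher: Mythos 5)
Your proof is correct and follows essentially the same route as the paper's: the identical change of measure $d\nu=G^{-1}g^{p'}d\mu$, the same auxiliary function $fg^{1-p'}$, and a single operator Jensen step. The one substantive difference is that you justify that step via the operator concavity of $t\mapsto t^{1/p}$, valid for all $p\ge 1$, whereas the paper invokes the operator convexity of $t\mapsto t^{p}$, which holds only for $1\le p\le 2$; your variant is therefore the cleaner (and, for $p>2$, the strictly necessary) justification of the full range $1<p<\infty$ claimed in the lemma.
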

\begin{proof}
If we set
$$d\nu(x)=\frac{g(x)^{p^\prime}d\mu(x)}{\int_{\Omega}g(x)^{p^{\prime}}d\mu(x)},$$
then $\nu(\Omega)=1$. By applying the operator convexity, we obtain
\begin{align*}
\hskip1pt \int_{\Omega}f(x)g(x)d\mu(x)
=&\ \hskip1pt \int_{\Omega}f(x)g(x)^{1-p^{\prime}}d\nu(x)\int_{\Omega}g(x)^{p^{\prime}}d\mu(x)\\
\leq&\ \Big(\int_{\Omega}f(x)^{p}g(x)^{(1-p^{\prime})p}d\nu(x)\Big)^{\frac{1}{p}}\int_{\Omega}g(x)^{p^{\prime}}d\mu(x)\\
=&\ \hskip1pt \Big(\int_{\Omega}f(x)^{p}d\mu(x)\Big)^{\frac{1}{p}}\Big(\int_{\Omega}g(x)^{p^{\prime}}d\mu(x)\Big)^{\frac{1}{p^{\prime}}},
\end{align*}
which finishes the proof.
\end{proof}

\section{The noncommutative Cotlar inequalities}

In this section, we prove  Proposition \ref{prop:nc cotlar}---the noncommutative Cotlar inequalities. 
\begin{lemma}\label{mainlemma1}
Let $T$ be a CZ operator defined as (\ref{30}) with a {\emph{standard}} kernel $k$. If we set
$$I_{\varepsilon,N}(x)=\int_{\varepsilon<|x-y|<N}k(x,y)dy,$$
then for $1< p<\infty$, 
$$\int_{|x-x_{0}|<N}|I_{\varepsilon,N}(x)|^{p}dx\leq C_{d,\gamma,p} N^{d}\ \ \mbox{for\ all}\ \varepsilon,\ N\ \mbox{and}\ x_{0}.$$
\end{lemma}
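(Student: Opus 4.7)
The strategy is to realize $I_{\varepsilon,N}$, restricted to $B(x_0,N)$, as the difference of two truncated operators applied to a single characteristic function, and then to invoke the classical uniform $L_p$-bound for the truncated Calder\'on--Zygmund family. Since $I_{\varepsilon,N}$ is a purely scalar-valued integral, no noncommutative machinery is needed; the whole proof is a classical, reducing step plus a black-box.

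First I would set $f:=\chi_{B(x_0,2N)}$ and claim that for every $x$ with $|x-x_0|<N$,
\[
I_{\varepsilon,N}(x)\;=\;T_\varepsilon f(x)\;-\;T_N f(x).
\]
This is immediate from \eqref{maximal12}: the right-hand side equals $\int_{\varepsilon<|x-y|\le N,\,y\in B(x_0,2N)}k(x,y)\,dy$, and the triangle inequality shows that whenever $|x-x_0|<N$ and $|x-y|\le N$ one automatically has $y\in B(x_0,2N)$, so the constraint $y\in\mathrm{supp}(f)$ is redundant and one recovers $I_{\varepsilon,N}(x)$ (equality being exact up to the measure-zero sphere $|x-y|=N$).

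Next I would invoke the classical scalar Cotlar inequality \eqref{weak cotlar} (say with $s=1$, cf.\ \cite[Theorem 4.2.4]{Gra2014}), which together with the Hardy--Littlewood maximal theorem and the $L_p$-boundedness of $T$ itself yields the uniform estimate
\[
\sup_{\varepsilon>0}\|T_\varepsilon g\|_p\;\le\;C_{d,\gamma,p}\|g\|_p,\qquad 1<p<\infty.
\]
Applying this with $g=f$, together with $\|f\|_p^p=c_d(2N)^d$, gives
\[
\int_{|x-x_0|<N}|I_{\varepsilon,N}(x)|^p\,dx\;\le\;2^p\bigl(\|T_\varepsilon f\|_p^p+\|T_N f\|_p^p\bigr)\;\le\;C'_{d,\gamma,p}\,N^d,
\]
which is the claim.

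I do not anticipate a substantive obstacle: the only subtlety is verifying the identity in the second step, i.e.\ matching the strict inequality $|x-y|>\varepsilon$ used in the definition of $T_\varepsilon$ with the strict inequality $\varepsilon<|x-y|$ defining $I_{\varepsilon,N}$, and checking that the upper endpoint $|x-y|=N$ indeed contributes a null set. The choice of the dilation factor in $f$ (I have taken $2N$ but $3N$ would also work) is cosmetic.
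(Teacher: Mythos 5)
Your proof is correct and follows essentially the same strategy as the paper's: both realize $I_{\varepsilon,N}$ on the ball as (a difference of) truncated operators applied to the indicator of a comparable ball and then invoke the classical uniform $L_p$-boundedness of the truncations $T_\varepsilon$, which the paper quotes directly from Stein rather than rederiving via Cotlar's inequality. Your variant is marginally cleaner in the comparison step: using $T_\varepsilon f-T_N f$ with $f=\chi_{B(x_0,2N)}$ gives an exact identity on all of $B(x_0,N)$, whereas the paper uses the single truncation $T_\varepsilon\chi_{B(x_0,N)}$, must bound the resulting discrepancy by $C_d$ via the size condition \eqref{1}, and therefore needs a preliminary covering reduction to the ball of radius $N/2$.
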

The proof of this lemma is quite similar as presented in \cite[Page 306]{St}. For the sake of self-completeness, we present a proof.
\begin{proof}
	Note that each ball with radius $N$ can be covered by a finite number of balls with radius $N/2$. Hence, it suffices to show
	\begin{equation}\label{maximal2}
		\int_{|x-x_{0}|<N/2}|I_{\varepsilon,N}(x)|^{p}dx\leq C_{d,\gamma,p}N^{d}\ \ \mbox{for\ all}\ \varepsilon,\ N\ \mbox{and}\ x_{0}.
	\end{equation}

From the $L_p$-boundedness of $T$ and the assumptions on $k$, we know that the truncated operators $T_\varepsilon$ are uniformly bounded in the $L_p$ norm (see e.g. \cite[Page 31, Proposition 1]{St}), that is
$$\sup_{\varepsilon>0}\|T_\varepsilon f\|_p\leq C_{d,\gamma,p}\|f\|_p,\ \forall f\in L_p(\mathbb{R}^{d}).$$

Now we exploit the above fact by comparing $I_{\varepsilon,N}(x)$ and $T_\varepsilon(\chi^{N,x_0})(x)$ whenever $|x-x_{0}|<N/2$, where $\chi^{N,x_0}$ is the characteristic function of the ball $\{y:|y-x_0|<N\}$.

Noting that under the condition $|x-x_{0}|<N/2$, we have
$$\{y:|x-y|<N\}\Delta\{y:|y-x_0|<N\}\subset\{y:N/2<|x-y|<3N/2\}.$$
As a consequence, if $|x-x_{0}|<N/2$,
\begin{align*}
	\hskip1pt \big|I_{\varepsilon,N}(x)-T_\varepsilon(\chi^{N,x_0})(x)\big|
	=&\ \hskip1pt \Big|\int_{\varepsilon<|x-y|<N}k(x,y)dy-\int_{\varepsilon<|x-y|\atop |y-x_0|<N}k(x,y)dy\Big|\\
	\leq&\ \int_{N/2<|x-y|<3N/2}|k(x,y)|dy
	\leq C_d,
\end{align*}
where the last inequality follows from the size condition (\ref{1}) of the kernel. Therefore, applying the above estimate, the uniform $L_p$ boundedness of $T_\varepsilon$ and the triangle inequality, we   get
\begin{align*}
	\hskip1pt \int_{|x-x_{0}|<N/2}|I_{\varepsilon,N}(x)|^{p}dx\leq&\
	C_{d,p}N^d+\|T_\varepsilon(\chi^{N,x_0})\|^p_p\\
	\leq&\ \hskip1pt C_{d,p}N^d+C_{d,\gamma,p}\|\chi^{N,x_0}\|^p_p\leq C_{d,\gamma,p}N^d.
\end{align*}
This completes the proof.
\end{proof}

With Lemma \ref{mainlemma1}, we can finish the proof of Proposition \ref{prop:nc cotlar}.
\begin{proof}[Proof of Proposition \ref{prop:nc cotlar}.]
Let $\varphi$ be a positive smooth, radially decreasing
function with integral one  supported in the ball $B(0,\frac{1}{2})$. For any $\varepsilon>0$, we will use the notation $\varphi_{\varepsilon}(x)=\frac{1}{\varepsilon^{d}}\varphi(\frac{x}{\varepsilon})$. Without loss of generality, we may assume that the kernel $k$ is real-valued. Indeed, by writing the kernel $k$ into its real and imaginary parts, we just need to verify each part separately (see e.g. \cite[Lemma 3.1]{HLX}).   On the other hand, since 
$C_c^\infty(\R^d)\otimes\mathcal{S}_{\M}$ is dense in $L_p(L_{\infty}(\mathbb{R}^{d})\overline{\otimes}\mathcal{M})$, it suffices to consider any fixed positive function $f\in C_c^\infty(\R^d)\otimes\mathcal{S}_{\M}$. In the following, we fix $1<s<\infty$ and $1<p<\infty$. Decompose $T_{\varepsilon}f$ as
\begin{eqnarray}\label{k1}
T_{\varepsilon}f(x)=\varphi_{\varepsilon}\ast(Tf)(x)+
T_{\varepsilon}f(x)-\varphi_{\varepsilon}\ast(Tf)(x).
\end{eqnarray}
The first term of right-hand side of (\ref{k1}) is easy to handle. Indeed, by Lemma \ref{result of cxy}, 
\begin{eqnarray}\label{k2}
\big\|{\sup_{\varepsilon>0}}^+\varphi_{\varepsilon}\ast(Tf)\big\|_p\leq C_{d} \big\|{\sup_{\varepsilon>0}}^{+}M_{\varepsilon} (Tf)\big\|_p.
\end{eqnarray}

On the other hand, recalled in \eqref{po} that there exists a CZ operator $T_0$ such that $Tf=T_0f+bf$,  where $T_0$ is the weak operator limit in $\mathcal B(L_2(L_{\infty}(\mathbb{R}^{d})\overline{\otimes}\mathcal{M}))$ of truncated operators of $T_{\varepsilon_{j}}$ for some sequence $\varepsilon_{j}\rightarrow0$ and $b\in {L_{\infty}(\mathbb{R}^{d})}$. 
As a consequence,
\begin{eqnarray}\label{k7}
\varphi_{\varepsilon}\ast(Tf)(x)=\varphi_{\varepsilon}\ast(T_0f)(x)+\varphi_{\varepsilon}\ast(bf)(x).
\end{eqnarray}
By Lemma \ref{result of cxy} and Lemma \ref{holderinequality},  the second term on the right-hand side of (\ref{k7}) is handled as 
\begin{eqnarray}\label{k8}
	\big\|{\sup_{\varepsilon>0}}^+\varphi_{\varepsilon}\ast(bf)\big\|_p\leq C_{d}\|b\|_\infty \big\|{\sup_{\varepsilon>0}}^{+}M_{\varepsilon} f\big\|_p\leq C_{d} \big\|{\sup_{\varepsilon>0}}^{+}\big(M_{\varepsilon} f^{s}\big)^{\frac{1}{s}}\big\|_p.
\end{eqnarray}

According to (\ref{k1})-(\ref{k8}), it remains to estimate $T_{\varepsilon}f(x)-\varphi_{\varepsilon}\ast(T_0f)(x)$. Notice that the kernel associated to $\varphi_{\varepsilon}\ast(T_0f)(x)$ is
$$\int_{\mathbb{R}^{d}}\varphi_{\varepsilon}(x-z)k(z,y)dz,$$
which should be understood as the principle value and equals to
$$\lim_{\varepsilon_{j}\rightarrow0}\int_{|z-y|>\varepsilon_{j}}\varphi_{\varepsilon}(x-z)k(z,y)dz.$$
If we set
$K(x,y,\varepsilon)\triangleq k(x,y)
\chi_{|x-y|>\varepsilon}-\int_{\mathbb{R}^{d}}\varphi_{\varepsilon}(x-z)k(z,y)dz,$
then 
$$T_{\varepsilon}f(x)-\varphi_{\varepsilon}\ast(T_0f)(x)=\int_{\mathbb{R}^{d}}K(x,y,\varepsilon)
f(y)dy.$$

In the following, we decompose
$$
K(x,y,\varepsilon)=K_{1}(x,y,\varepsilon)+K_{2}(x,y,\varepsilon)
$$
where
$$K_{1}(x,y,\varepsilon)=K(x,y,\varepsilon)\chi_{|x-y|>\varepsilon}\ \ \mbox{and}\ \ K_{2}(x,y,\varepsilon)=K(x,y,\varepsilon)\chi_{|x-y|\leq\varepsilon}.$$
To estimate $K_{1}(x,y,\varepsilon)$, by the property of $\varphi$, we get
$$K_{1}(x,y,\varepsilon)=\int_{\mathbb{R}^{d}}\varphi_{\varepsilon}(x-z)(k(x,y)-k(z,y))dz\chi_{|x-y|>\varepsilon}.$$
The support of $\varphi$ gives $|x-z|<\frac{\varepsilon}{2}<\frac{1}{2}|x-y|$. Hence, the Lipschitz smoothness condition and the fact that $K_{1}(x,y,\varepsilon)$ is real-valued imply
\begin{align*}
K_{1}(x,y,\varepsilon)
\leq C\int_{\mathbb{R}^{d}}\varphi_{\varepsilon}(x-z)\frac{|x-z|^{\gamma}}{|x-y|^{d+\gamma}}dz\chi_{|x-y|>\varepsilon}
\leq C \frac{\varepsilon^{\gamma}}{|x-y|^{d+\gamma}}\chi_{|x-y|>\varepsilon}.
\end{align*}
Thus by above observations, 
\begin{align*}
T_{K,\varepsilon,1}f(x)\triangleq\int_{\mathbb{R}^{d}}K_{1}(x,y,\varepsilon)f(y)dy
\leq&\ C\varepsilon^{\gamma}\sum_{k=0}^{\infty}
\int_{2^{k}\varepsilon<|x-y|<2^{k+1}\varepsilon}\frac{f(y)}{|x-y|^{d+\gamma}}dy
\\
\leq&\ C \sum_{k=0}^{\infty}2^{-k\gamma}\frac{1}{(2^{k+1}\varepsilon)^{d}}
\int_{|x-y|<2^{k+1}\varepsilon}f(y)dy.
\end{align*}
On the other hand, it is obvious to see
$$
-C\sum_{k=0}^{\infty}2^{-k\gamma}\frac{1}{(2^{k+1}\varepsilon)^{d}}
\int_{|x-y|<2^{k+1}\varepsilon}f(y)dy\leq T_{K,\varepsilon,1}f(x).$$
Therefore, by Remark \ref{rk:MaxFunct} and Lemma \ref{holderinequality}, we deduce that
\begin{eqnarray}\label{k3}
\big\|{\sup_{\varepsilon>0}}^+T_{K,\varepsilon,1}f\big\|_p
\leq C\sum_{k=0}^{\infty}2^{-k\gamma}\|{\sup_{\varepsilon>0}}^+M_{\varepsilon}f\|_p
\leq C_\gamma\|{\sup_{\varepsilon>0}}^+\big(M_{\varepsilon} f^{s}\big)^{\frac{1}{s}}\|_p.
\end{eqnarray}

Now we turn to estimate $K_{2}(x,y,\varepsilon)$. However, in this case
$$K_{2}(x,y,\varepsilon)=-\lim_{\varepsilon_{j}\rightarrow0}\int_{|z-y|>\varepsilon_{j}}\varphi_{\varepsilon}(x-z)k(z,y)dz\chi_{|x-y|\leq\varepsilon}.$$
In the following, we decompose $K_{2}(x,y,\varepsilon)$ as
\begin{align*}
K_{2}(x,y,\varepsilon) & =-\Big[\int_{|z-y|\geq\frac{\varepsilon}{2}}\varphi_{\varepsilon}(x-z)k(z,y)dz\chi_{|x-y|\leq\varepsilon}\\
       & \ \ \ \ \ \ +\lim_{\varepsilon_{j}\rightarrow0}\int_{\varepsilon_{j}<|z-y|<\frac{\varepsilon}{2}}(\varphi_{\varepsilon}(x-z)-
       \varphi_{\varepsilon}(x-y))k(z,y)dz\chi_{|x-y|\leq\varepsilon}\\
       & \ \ \ \ \ \ +\lim_{\varepsilon_{j}\rightarrow0}\int_{\varepsilon_{j}<|z-y|<\frac{\varepsilon}{2}}
       \varphi_{\varepsilon}(x-y)k(z,y)dz\chi_{|x-y|\leq\varepsilon}\Big]\\
       &\triangleq -\Big[A_{\varepsilon}^{1}(x,y)+A_{\varepsilon}^{2}(x,y)+A_{\varepsilon}^{3}(x,y)\Big].
\end{align*}

To handle $A_{\varepsilon}^{1}(x,y)$, by applying the size condition (\ref{1}), we have
$$A_{\varepsilon}^{1}(x,y)\leq C
\int_{|z-y|\geq\frac{\varepsilon}{2}}
\frac{\varphi_{\varepsilon}(x-z)}{|z-y|^{d}}dz\chi_{|x-y|\leq\varepsilon}
\leq C_d\frac{1}{\varepsilon^{d}}\chi_{|x-y|\leq\varepsilon}.$$
Therefore, 
$$-C_d\frac{1}{\varepsilon^{d}}\int_{|x-y|\leq\varepsilon}f(y)dy\leq
T_{K,\varepsilon,2}^{1}f(x)\triangleq\int_{\mathbb{R}^{d}}A_{\varepsilon}^{1}(x,y)f(y)dy
\leq C_d\frac{1}{\varepsilon^{d}}\int_{|x-y|\leq\varepsilon}f(y)dy,$$
which implies that
\begin{eqnarray}\label{k4}
\big\|{\sup_{\varepsilon>0}}^+T^{1}_{K,\varepsilon,2}f\big\|_p
\leq C_d\|{\sup_{\varepsilon>0}}^+M_{\varepsilon}f\|_p\leq C_d\big\|{\sup_{\varepsilon>0}}^{+}\big(M_{\varepsilon} f^{s}\big)^{\frac{1}{s}}\big\|_p.
\end{eqnarray}

For the second term $A_{\varepsilon}^{2}(x,y)$, using the mean value theorem, the size condition (\ref{1}) of the kernel and the polar coordinates formula, we get
$$A_{\varepsilon}^{2}(x,y)\leq C
\frac{1}{\varepsilon^{d+1}}\int_{|z-y|<\frac{\varepsilon}{2}}
\frac{1}{|z-y|^{d-1}}dz\chi_{|x-y|\leq\varepsilon}
\leq C\frac{1}{\varepsilon^{d}}\chi_{|x-y|\leq\varepsilon}.$$
Hence, we deduce that
$$-C\frac{1}{\varepsilon^{d}}\int_{|x-y|\leq\varepsilon}f(y)dy\leq
T_{K,\varepsilon,2}^{2}f(x)\triangleq\int_{\mathbb{R}^{d}}A_{\varepsilon}^{2}(x,y)f(y)dy
\leq C\frac{1}{\varepsilon^{d}}\int_{|x-y|\leq\varepsilon}f(y)dy.$$
This gives rise to
\begin{eqnarray}\label{k5}
\big\|{\sup_{\varepsilon>0}}^+T^{2}_{K,\varepsilon,2}f\big\|_p
\leq C\|{\sup_{\varepsilon>0}}^+M_{\varepsilon}f\|_p\leq C\big\|{\sup_{\varepsilon>0}}^{+}\big(M_{\varepsilon} f^{s}\big)^{\frac{1}{s}}\big\|_p.
\end{eqnarray}

We now deal with the last term $A_{\varepsilon}^{3}(x,y)$. To see this, we rewrite $A_{\varepsilon}^{3}(x,y)$ as
$$A_{\varepsilon}^{3}(x,y)=\lim_{\varepsilon_{j}\rightarrow0}I_{\varepsilon_{j},\frac{\varepsilon}{2}}(y)
       \varphi_{\varepsilon}(x-y)\chi_{|x-y|\leq\varepsilon},$$
where
$$I_{\varepsilon_{j},\frac{\varepsilon}{2}}(y)=\int_{\varepsilon_{j}<|z-y|<\frac{\varepsilon}{2}}k(z,y)dz.$$
By applying the H\"{o}lder inequality stated in Lemma \ref{holderinequality}, we find
\begin{align*}
T_{K,\varepsilon,2}^{3}f(x)\triangleq&\ \int_{|x-y|\leq\varepsilon}A_{\varepsilon}^{3}(x,y)f(y)dy\leq\int_{|x-y|\leq\varepsilon}|A_{\varepsilon}^{3}(x,y)|f(y)dy\\
\leq&\ \Big(\int_{|x-y|\leq\varepsilon}\lim_{\varepsilon_{j}\rightarrow0}|I_{\varepsilon_{j},\frac{\varepsilon}{2}}(y)|^{s^{\prime}}
\varphi_{\varepsilon}(x-y)dy\Big)^{\frac{1}{s^{\prime}}}
\Big(\int_{|x-y|\leq\varepsilon}
\varphi_{\varepsilon}(x-y)f(y)^{s}dy\Big)^{\frac{1}{s}}.
\end{align*}
Using the
Fatou lemma, the support of $\varphi$ and Lemma \ref{mainlemma1}, we can see that the first term above can be controlled by
$$\Big(\lim_{\varepsilon_{j}\rightarrow0}\int_{|x-y|\leq\frac{\varepsilon}{2}}|I_{\varepsilon_{j},\frac{\varepsilon}{2}}(y)|^{s^{\prime}}
\varphi_{\varepsilon}(x-y)dy\Big)^{\frac{1}{s^{\prime}}}\leq C\Big(\frac{1}{\varepsilon^{d}}\lim_{\varepsilon_{j}\rightarrow0}\int_{|x-y|\leq\frac{\varepsilon}{2}}
|I_{\varepsilon_{j},\frac{\varepsilon}{2}}(y)|^{s^{\prime}}
dy\Big)^{\frac{1}{s^{\prime}}}\leq C_{d,\gamma,s}.$$
On the other hand, it is easy to see that
$$\Big(\int_{|x-y|\leq\varepsilon}
\varphi_{\varepsilon}(x-y)f(y)^{s}dy\Big)^{\frac{1}{s}}\leq
C\Big(\frac{1}{\varepsilon^{d}}\int_{|x-y|\leq\varepsilon}
f(y)^{s}dy\Big)^{\frac{1}{s}}.$$
Furthermore, it is trivial that
$$-C_{d,\gamma,s}
\Big(\frac{1}{\varepsilon^{d}}\int_{|x-y|\leq\varepsilon}
f(y)^{s}dy\Big)^{\frac{1}{s}}\leq T_{K,\varepsilon,2}^{3}f(x).$$
Therefore, by Remark \ref{rk:MaxFunct},
\begin{eqnarray}\label{k6}
\big\|{\sup_{\varepsilon>0}}^+T^{3}_{K,\varepsilon,2}f\big\|_p\leq C_{d,\gamma,s}\big\|{\sup_{\varepsilon>0}}^{+}\big(M_{\varepsilon} f^{s}\big)^{\frac{1}{s}}\big\|_p
\end{eqnarray}

Finally,
putting (\ref{k1})-(\ref{k6}) together with the Minkowski inequality, we  obtain 
$$
\big\|{\sup_{\varepsilon>0}}^{+}T_{\varepsilon} f\big\|_p\leq C_{d}\|{\sup_{\varepsilon>0}}^{+}M_{\varepsilon} (Tf)\big\|_p+C_{d,\gamma,s}\big\|{\sup_{\varepsilon>0}}^{+}\big(M_{\varepsilon} f^{s}\big)^{\frac{1}{s}}\big\|_p.
$$
This completes the argument of Proposition \ref{prop:nc cotlar}.
\end{proof}

\section{Proof of Theorem \ref{thm:maximal}}

Now we are at a position to prove Theorem \ref{thm:maximal}.
\begin{proof}[Proof of Theorem \ref{thm:maximal} (i) and (ii).]
As we pointed out previously, combining \cite[Theorem 1.1]{HLX}, it suffices to show the strong type $(p,p)$ estimates in (i). 

Now let $1<p<\infty$ and $f\in L_p(L_{\infty}(\mathbb{R}^{d})\overline{\otimes}\mathcal{M}).$ By decomposing $f = f_{1}-f_{2} +i(f_{3}-f_{4})$ with positive $f_{j}$ ($j=1,2,3,4$), we assume that $f$ is positive. By Proposition \ref{prop:nc cotlar}, it suffices to estimate the two terms on the right hand of (\ref{nc cotlar}). To this end, we first use the noncommutative Hardy-Littlewood maximal $L_p$ estimate \cite{M} to deduce that
\begin{align}\label{cotlar norm}
\big\|{\sup_{\varepsilon>0}}^{+}M_{\varepsilon} (Tf)\big\|_p\leq C_{d}\frac{p^2}{(p-1)^2}\|Tf\|_p\leq C_{\gamma,d}\frac{p^4}{(p-1)^3}\|f\|_p,
\end{align}
where the second inequality follows from the strong type $(p,p)$ boundedness of $T$ with bound $C_{\gamma,d}\frac{p^2}{p-1}$ (see \cite[Theorem A]{JP1}). 

It remains to estimate the second term appeared on the right of (\ref{nc cotlar}).  Choose $s$ such that $1<s<p$. By applying the noncommutative Hardy-Littlewood maximal inequalities to $f^{s}\in L_{p/s}(L_{\infty}(\mathbb{R}^{d})\overline{\otimes}\mathcal{M})$, there exists a positive operator $F\in L_{p/s}(L_{\infty}(\mathbb{R}^{d})\overline{\otimes}\mathcal{M})$ such that for any $\varepsilon>0$,
$$\|F\|_{p/s}\leq C_{d}\frac{p^{2}}{(p-s)^{2}}\|f^{s}\|_{p/s}\quad\mbox{and}\quad
    M_{\varepsilon}(f^{s})\le F.$$
As a consequence, by the monotone increasing property of $0\leq a\rightarrow a^{t}$ for $0<t<1$, we infer that for any $\varepsilon>0$,
$$\big(M_{\varepsilon}(f^{s})\big)^{^{1/s}}\leq F^{1/s}\ \mbox{and}\ \|F^{1/s}\|_{p}=\|F\|^{1/s}_{p/s}\leq C^{1/s}_{d}\frac{p^{2/s}}{(p-s)^{2/s}}\|f\|_{p}.$$
Therefore, we find
\begin{eqnarray}\label{k676}
\big\|{\sup_{\varepsilon>0}}^{+}\big(M_{\varepsilon} f^{s}\big)^{\frac{1}{s}}\big\|_p\leq\|F^{1/s}\|_{p}\leq C^{1/s}_{d}\frac{p^{2/s}}{(p-s)^{2/s}}\|f\|_{p}.
\end{eqnarray}

Finally, together with (\ref{cotlar norm}) and (\ref{k676}) and letting $s\rightarrow1$,
we see that 
$$\big\|{\sup_{\varepsilon>0}}^+T_{\varepsilon}f\big\|_p\leq C_{d,\gamma}\frac{p^4}{(p-1)^3}\|f\|_{p}.$$
This finishes the proof. 
\end{proof}

\begin{proof}[Proof of Theorem \ref{thm:maximal} (iii).]
Let $f\in L_p(L_{\infty}(\mathbb{R}^{d})\overline{\otimes}\mathcal{M})$ and $(\varepsilon_j)_{j\in\N}$ be the sequence that appeared in (\ref{po}). It is already known from  (\ref{po}) that $T_{\varepsilon_j}f\rightarrow T_0f$ in  the sense of the weak operator topology in  $\mathcal B(L_2(L_{\infty}(\mathbb{R}^{d})\overline{\otimes}\mathcal{M}))$. Hence, we just need to show the b.a.u convergence of $(T_{\varepsilon_j}f)_j$ whenever $j\rightarrow\infty$, since then the limit must be $T_0f$ (see e.g. \cite{CXY13}). Moreover,
by \cite[Proposition 1.3]{CL1}, it is enough to show  for any $\delta>0$, there exists a projection $e\in L_{\infty}(\mathbb{R}^{d})\overline{\otimes}\mathcal{M}$ such that 
\begin{eqnarray}\label{k6761}
\int\otimes\tau(1-e)\leq\delta\ \ \mbox{and}\ \ \|e(T_{\varepsilon_k}f-T_{\varepsilon_\ell}f)e\|_{\infty}\rightarrow0\ \mbox{as}\ k,\ell\rightarrow\infty.
\end{eqnarray}

Fix $\delta>0$. For each $n\geq1$, by the density of $C^\infty_c(\mathbb R^d)\otimes S_{\mathcal M}$ in $L_p(L_{\infty}(\mathbb{R}^{d})\overline{\otimes}\mathcal{M})$, we can find $g_n\in C^\infty_c(\mathbb R^d)\otimes S_{\mathcal M}$ such that $\|f-g_n\|_p^p\leq\frac{\delta}{2^n n^p}$. Then using the fact that $(T_{\varepsilon_{j}})_{j\in\N}$ is of weak type $(p,p)$ ($1\leq p<\infty$), there is a projection $e_{n}\in L_{\infty}(\mathbb{R}^{d})\overline{\otimes}\mathcal{M}$ such that
\begin{equation*}\sup_{j}\|e_{n}T_{\varepsilon_{j}} (f-g_{n})e_{n}\|_{\infty}<\frac1n\ \mbox{and}\
	\int\otimes\tau(1-e_n)<n^p\|f-g_n\|_p^p\leq\frac{\delta}{2^{n}}.
\end{equation*}
Let $e=\wedge_{n}e_{n}$. Then 
\begin{align}\label{err}\int\otimes\tau(e^\perp)<\delta\ \mbox{and}\ \sup_{j}\|eT_{\varepsilon_{j}} (f-g_{n})e\|_{\infty}<\frac{1}{n},\ \forall\ n\geq1.\end{align}

On the other hand, we have
\begin{align}\label{uni}
	\lim_{\ell,k\rightarrow+\infty}\|T_{\varepsilon_{\ell}}g_{n}-T_{\varepsilon_{k}}g_{n}\|_{\infty}=0\ \forall n\geq1.
\end{align}
Indeed, write $g_n=\sum_i\phi_{i}\otimes m_{i}$ with finite sum of $i$ such that $\phi_{i}\in C^\infty_c(\mathbb R^d)$ and $m_{i}\in S_{\mathcal M}$. Then for $\varepsilon_\ell<\varepsilon_k$, one has
\begin{align*}
	T_{\varepsilon_{\ell}}g_{n}(x)-T_{\varepsilon_{k}}g_{n}(x) & = \sum_{i}\int_{\varepsilon_{\ell}<|x-y|\leq\varepsilon_{k}}k(x,y)\phi_{i}(y)dy\otimes m_{i},
\end{align*}
which tends to 0 as $\ell,k\rightarrow+\infty$ (see e.g. \cite[Page 234]{Gra2014}). Therefore, by (\ref{uni})
we know that for any $n\geq1$, there is a positive constant $N_{n}$ such that for any $\ell,k>N_{n}$
$$\|T_{\varepsilon_{\ell}}g_{n}-T_{\varepsilon_{k}}g_{n}\|_{\infty}<\frac{1}{n}.$$
Then together with \eqref{err}, we finally get for any $\ell,k>N_{n}$,
\begin{align*}
	\|e(T_{\varepsilon_{\ell}}f-T_{\varepsilon_{k}}f)e\|_{\infty} &\leq \|T_{\varepsilon_{\ell}}g_{n}-T_{\varepsilon_{k}}g_{n}\|_{\infty}+\|e
	T_{\varepsilon_{\ell}}(f-g_{n})e\|_{\infty}
	+\|eT_{\varepsilon_{k}}(f-g_n)e\|_{\infty}<\frac{3}{n},
\end{align*}
which yields
$$\lim_{\ell,k\rightarrow\infty}\|e(T_{\varepsilon_{\ell}}f-T_{\varepsilon_{k}}f)e\|_{\infty}=0.$$
This completes the proof.
\end{proof}


\end{document}